\definecolor{webgreen}{rgb}{0,.5,0}
\definecolor{webbrown}{rgb}{.6,0,0}
\theoremstyle{plain}
\newtheorem{theorem}{Theorem}
\newtheorem{lemma}[theorem]{Lemma}
\newtheorem{corollary}[theorem]{Corollary}
\theoremstyle{definition}
\newtheorem{definition}[theorem]{Definition}
\newtheorem{example}[theorem]{Example}
\newtheorem{notation}[theorem]{Notation}
\newtheorem{remark}[theorem]{Remark}
\DeclareMathOperator{\bra}{bp}
\newcommand{\cla}[1]{\pi\left(#1\right)}
\newcommand{\clb}[1]{\pi\big(#1\big)}
\newcommand{\cof}[1]{\left[#1\right]}
\newcommand{\bp}[1]{\bra\left(#1\right)}
\newcommand{\br}[1]{\left<#1\right>}
\newcommand{\iD}[1]{\item[D\ref*{#1}:]}
\newcommand{\iN}[1]{\item[N\ref*{#1}:]}
\newcommand{\iR}[1]{\item[R\ref*{#1}:]}
\newcommand{\iB}[1]{\item[B\ref*{#1}:]}
\newcommand{\Figs}[1]{\hyperref[#1]{Figure~\ref*{#1}}}
\newcommand{\seqnum}[1]{\href{http://oeis.org/#1}{\underline{#1}}}
\newcommand{\Tabs}[1]{\hyperref[#1]{Table~\ref*{#1}}}
\newcommand{\Lem}[1]{\hyperref[#1]{Lemma~\ref*{#1}}}
\newcommand{\Rem}[1]{\hyperref[#1]{Remark~\ref*{#1}}}
\newcommand{\Subsec}[1]{\hyperref[#1]{subsection~\ref*{#1}}}
\newcommand{\Sec}[1]{\hyperref[#1]{section~\ref*{#1}}}
\title{\bf A bracket polynomial for $ 2 $-tangle shadows}
\author{Franck  Ramaharo\\
\small D\'epartement de Math\'ematiques et Informatique\\[-0.8ex]
\small Universit\'e d'Antananarivo\\[-0.8ex] 
\small 101 Antananarivo, Madagascar\\
\small\href{mailto:franck.ramaharo@gmail.com}{\tt franck.ramaharo@gmail.com}\\
}
\date{\small\today}
\begin{document}
\maketitle

\begin{abstract}
We compute the Kauffman bracket polynomial of the numerator and denominator closures of $ A + A + \cdots + A $ ($ A $ is repeated $ n $ times), where $ A $ is a $ 2 $-tangle shadow that has at most $ 4 $ crossings.

\bigskip\noindent {Keywords:} tangle shadow, bracket polynomial.
\end{abstract}

\section{Introduction}
A $ 2 $-tangle shadow is a $ 2 $-tangle diagram without under/over crossing information \cite{Krebes,Medina}. The following definition holds for such class of tangles.

\begin{definition} 
Let $ A:=\protect\includegraphics[width=.05\linewidth,valign=c]{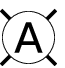} $ and $ B:=\protect\includegraphics[width=.05\linewidth,valign=c]{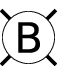} $ be two $ 2 $-tangles. Then

\begin{itemize}
\item $ A+B:=\protect\includegraphics[width=.11\linewidth,valign=c]{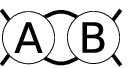} $ denotes the \textit{horizontal sum};
\item $ A*B:=\protect\includegraphics[width=.05\linewidth,valign=c]{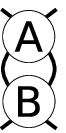} $  denotes the \textit{vertical sum};
\item $ \dfrac{1}{A}:=\protect\includegraphics[width=.05\linewidth,valign=c]{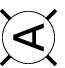} $ denote the \textit{inverse of $ A  $} which is obtained  by turning the tangle counter-clockwise by $ 90 $ degree  in the plane;
\item $ N(A):= \protect\includegraphics[width=.075\linewidth,valign=c]{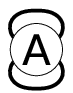}$ and 
$ D(A):= \protect\includegraphics[width=.075\linewidth,valign=c]{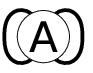} $ denote the \textit{numerator} and the \textit{denominator closures}, respectively, of $ A $.
\end{itemize}

The simplest $ 2 $-tangles are
\[
[0]:=\protect\includegraphics[width=.05\linewidth,valign=c]{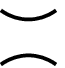},\qquad
[\infty]:=\protect\includegraphics[width=.05\linewidth,valign=c]{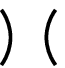},\qquad
[n]:=\protect\includegraphics[width=.2\linewidth,valign=c]{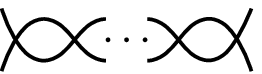},\qquad \dfrac{1}{[n]}:=\protect\includegraphics[width=.05\linewidth,valign=c]{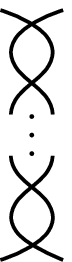}. 
\]
\end{definition}

Set $ A_n:=A+A+\cdots+A $, where $ A $ is repeated $ n $ times (with $ A_0:=[0] $). In the present paper, we compute the Kauffman bracket polynomial of the closures $ N\left(A_n\right) $ and $ D\left(A_n\right) $. We restrict our computation to $ 2 $-tangles of up to $ 4 $ crossings. In fact, since $ N\left(A_n\right) $ and $ D\left(A_n\right) $ are knot shadows, we extend previous results in which we investigated the generating polynomial of knots of up to $ 3 $ crossings \cite{Ramaharo}.

The rest of the paper is organized as follows. We give in \Sec{sec:bracket} some properties of the bracket polynomial of $ 2 $-tangle shadows. By those properties, we obtain a set of integer polynomials which is presented in \Sec{sec:application}. We give as well the tables giving the coefficients in the expansion of each polynomial. 

Throughout the paper by ``tangle'' and ``knot'' we understand ``2-tangle shadow diagram'' and ``knot shadow diagram'', respectively. Also, we study tangles and knots up to planar isotopy.

\section{Definition and construction}\label{sec:bracket}

In the present framework, the Kauffman bracket polynomial of a shadow diagram is an integer polynomial in $ x $ such that the coefficient of $ x^k $ matches the number of state diagrams having exactly $ k $ circles \cite{Ramaharo,Ramaharo1}. This definition extends to tangles as follows.

\begin{definition}
Let $ A $ be a tangle. The bracket polynomial of $ A $, denoted $ \left<A\right> $, is defined by
\begin{equation} \label{eq:lincomb}
\br{A} = a(A)\br{[0]} + b(A)\br{[\infty]},
\end{equation}
where $ a(A) $ and $ b(A) $ are integer polynomials. The linear combination of $ \br{[0]} $ and $ \br{[\infty]} $ in \eqref{eq:lincomb} is obtained from the following usual rules for shadow diagrams:
\begin{itemize}
\item $ \br{\bigcirc} = x $;
\item $ \br{\bigcirc\sqcup A} = x\br{A} $;
\item  $\left<\protect\includegraphics[width=.04\linewidth,valign=c]{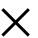}\right>=\left<\protect\includegraphics[width=.04\linewidth,valign=c]{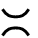}\right>+\left<\protect\includegraphics[width=.04\linewidth,valign=c]{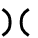}\right>$, where \protect\includegraphics[width=.04\linewidth,valign=c]{split_2} and  \protect\includegraphics[width=.04\linewidth,valign=c]{split_1} denote the states of the crossing \protect\includegraphics[width=.04\linewidth,valign=c]{crossing}.
\end{itemize}
\end{definition}

For example, $ \left<[2]\right>=\left<[0]\right>+(x+2)\left<[\infty]\right> $. Indeed, we have
\begin{align*}
\left<\protect\includegraphics[width=.075\linewidth,valign=c]{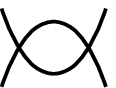}\right> &= \left<\protect\includegraphics[width=.075\linewidth,valign=c]{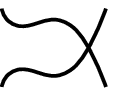}\right>+\left<\protect\includegraphics[width=.075\linewidth,valign=c]{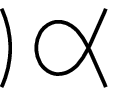}\right>\\
&=\left<\protect\includegraphics[width=.075\linewidth,valign=c]{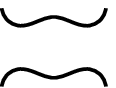}\right>+\left<\protect\includegraphics[width=.075\linewidth,valign=c]{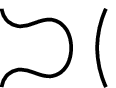}\right>+\left<\protect\includegraphics[width=.075\linewidth,valign=c]{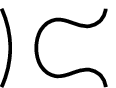}\right>+\left<\protect\includegraphics[width=.075\linewidth,valign=c]{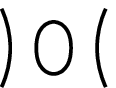}\right>.
\end{align*}

\begin{lemma}
The bracket of the closures $ N(A) $ and $ D(A) $ are given by
\begin{equation*}
\br{N(A)} = x^2a(A) + xb(A)   
\end{equation*}
and
\begin{equation*}
\br{D(A)} = xa(A) + x^2b(A),
\end{equation*}
respectively.
\end{lemma}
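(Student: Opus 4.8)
The plan is to exploit the fact that both closures are \emph{linear} with respect to the state-sum expansion that defines the bracket. First I would observe that every crossing of $A$ lies in the interior of the tangle box and is therefore disjoint from the arcs adjoined by either closure. Consequently, smoothing a crossing of $A$ inside $N(A)$ (resp.\ $D(A)$) via the third bullet produces exactly the diagram one gets by smoothing that crossing inside $A$ and then closing; the smoothing rule and the closure act in disjoint regions of the plane, so they commute. Combining this with the rule $\langle\bigcirc\sqcup A\rangle = x\langle A\rangle$, I would conclude that if $\langle A\rangle = a(A)\langle[0]\rangle + b(A)\langle[\infty]\rangle$, then
\[
\langle N(A)\rangle = a(A)\langle N([0])\rangle + b(A)\langle N([\infty])\rangle,
\qquad
\langle D(A)\rangle = a(A)\langle D([0])\rangle + b(A)\langle D([\infty])\rangle.
\]

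To make this linearity fully precise I would write out the state sum explicitly. Expanding all crossings of $A$ gives $\langle A\rangle = \sum_s x^{c(s)}\langle T_s\rangle$, where $T_s$ is the crossingless tangle underlying state $s$ and $c(s)$ counts its free circles. Since a crossingless $2$-tangle joins the four boundary points by one of only two non-crossing matchings, each $T_s$ is planar-isotopic to $[0]$ or to $[\infty]$; collecting the terms of each type recovers $a(A)$ and $b(A)$ as the respective sums of the $x^{c(s)}$. Applying $N$ (or $D$) term by term and using $N(\bigcirc\sqcup T_s) = \bigcirc\sqcup N(T_s)$ then yields the two displayed identities, in which $\langle N([0])\rangle,\dots,\langle D([\infty])\rangle$ are now genuine elements of $\mathbb{Z}[x]$ because the closures are knot shadows.

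The remaining step is a direct computation of the four base cases by counting circles. A glance at the defining diagrams shows that $N([0])$ is a pair of disjoint circles while $N([\infty])$ is a single circle, so $\langle N([0])\rangle = x^2$ and $\langle N([\infty])\rangle = x$; for the denominator closure the count is reversed, giving $\langle D([0])\rangle = x$ and $\langle D([\infty])\rangle = x^2$. Substituting these values into the identities above produces $\langle N(A)\rangle = x^2 a(A) + x b(A)$ and $\langle D(A)\rangle = x a(A) + x^2 b(A)$, which is the assertion of the lemma.

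I expect the only genuine obstacle to be the first paragraph, namely making rigorous that the closure operations distribute over the skein expansion (equivalently, that smoothing-then-closing agrees with closing-then-smoothing). This is intuitively transparent because the two operations are supported in disjoint parts of the plane, but it is the one place that must be argued rather than merely computed; once it is settled, the circle counts in the base cases are routine.
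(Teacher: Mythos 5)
Your proof is correct and takes essentially the same approach as the paper: the paper's proof consists of the single observation that the crossings are smoothed without affecting the closures (cited to Kauffman--Lambropoulou), followed by the linearity expansion $\br{N(A)} = a(A)\br{N([0])} + b(A)\br{N([\infty])}$, $\br{D(A)} = a(A)\br{D([0])} + b(A)\br{D([\infty])}$ and the same circle counts $\br{N([0])} = x^2$, $\br{N([\infty])} = x$, $\br{D([0])} = x$, $\br{D([\infty])} = x^2$ that you obtain. Your state-sum paragraph merely spells out the commutation-of-smoothing-and-closure step that the paper asserts in one line.
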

\begin{proof}
The crossings are smoothed without affecting the closures \cite{KauffmanLambropoulou}. Hence
\begin{align*}
\br{N(A)} &= a(A)\br{N([0])} + b(A)\br{N([\infty])} \\
&= a(A)x^2 + b(A)x 
\end{align*}
and
\begin{align*}
\br{D(A)} &= a(A)\br{D([0])} + b(A)\br{D([\infty])} \\
&= a(A)x + b(A)x^2.
\end{align*}
\end{proof}

\begin{remark} Let $ R(A)$ be the operation defined by $N\left(A+[1]\right)$. Then
\[
\br{R(A)} = \br{N(A)} + \br{D(A)} = \left(x^2 + x\right)\left(a(A) + b(A)\right).
\]
We understand $ R(A) $  as an interpretation of  the DNA recombination \cite{ErnstSumners,KauffmanLambropoulou}. For simplicity, we call it the $ R $-closure.
\end{remark}

\begin{lemma}
Given two tangles $ A $ and $ B $, 
\begin{equation}\label{eq:bracket_sum}
\br{A+B} = a(A)a(B)\br{[0]} + \big(a(A)b(B) + b(A)a(B) + xb(A)b(B)\big)\br{[\infty]}.
\end{equation}

\end{lemma}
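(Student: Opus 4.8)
The plan is to exploit the fact that the horizontal sum interacts \emph{bilinearly} with the skein decomposition \eqref{eq:lincomb}. The crossings of $A$ occupy a region of the tangle box disjoint from those of $B$, so resolving every crossing of $A+B$ amounts to choosing a state of $A$ and, independently, a state of $B$, and then forming the horizontal sum of the two resulting crossingless tangles. Hence the state sum for $A+B$ factors through the state sums for $A$ and $B$ separately, and substituting the expansions $\br{A}=a(A)\br{[0]}+b(A)\br{[\infty]}$ and $\br{B}=a(B)\br{[0]}+b(B)\br{[\infty]}$ while distributing over the sum gives
\begin{align*}
\br{A+B} &= a(A)a(B)\br{[0]+[0]} + a(A)b(B)\br{[0]+[\infty]} \\
&\quad + b(A)a(B)\br{[\infty]+[0]} + b(A)b(B)\br{[\infty]+[\infty]}.
\end{align*}
So it suffices to evaluate the four horizontal sums of the trivial tangles $[0]$ and $[\infty]$.

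Next I would compute each base case by tracing the strands after gluing the right endpoints of the first tangle to the left endpoints of the second. Recalling that $[0]$ is the pair of horizontal arcs (top endpoints joined, bottom endpoints joined) and $[\infty]$ the pair of vertical arcs (left endpoints joined, right endpoints joined), a direct trace shows $[0]+[0]=[0]$; that in each mixed case the strands recombine into the $[\infty]$ pattern with no closed loop created, so $[0]+[\infty]=[\infty]$ and $[\infty]+[0]=[\infty]$; and that $[\infty]+[\infty]$ reproduces the $[\infty]$ pattern together with exactly one free circle, formed by the two interior arcs closing on each other at the gluing seam. Using $\br{\bigcirc\sqcup[\infty]}=x\br{[\infty]}$, the last equals $x\br{[\infty]}$, while the first three equal $\br{[0]}$, $\br{[\infty]}$, and $\br{[\infty]}$, respectively.

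Substituting these four values and collecting the coefficient of $\br{[\infty]}$ yields
\[
\br{A+B}=a(A)a(B)\br{[0]}+\big(a(A)b(B)+b(A)a(B)+xb(A)b(B)\big)\br{[\infty]},
\]
which is precisely \eqref{eq:bracket_sum}.

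The one step I would treat most carefully — the genuine obstacle rather than the routine arc-tracing — is the bilinearity claim underlying the first display: that $\br{A+B}$ depends on $A$ and $B$ only through the pairs $(a(A),b(A))$ and $(a(B),b(B))$, so that horizontal summation descends to a well-defined bilinear operation on the span of $\br{[0]}$ and $\br{[\infty]}$. This is intuitively forced by the locality of the skein relation, but to make it rigorous I would show that the full state sum of $A+B$ splits as a double sum over a state of $A$ and a state of $B$, each crossingless $A$-state contributing an $x$-power multiple of $[0]$ or $[\infty]$ (and likewise for $B$); the four base computations above then convert this double sum into the asserted identity.
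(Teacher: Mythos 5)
Your proof is correct and takes essentially the same approach as the paper's: both exploit the locality of the skein relation to resolve the crossings of $A$ and $B$ independently, and both come down to the same base computations ($[0]$ acting as an identity for the horizontal sum, the mixed sums giving $[\infty]$, and $[\infty]+[\infty]$ producing a free circle and hence $x\br{[\infty]}$). The only difference is organizational—the paper resolves the crossings of $A$ first and then those of $B$ sequentially, while you expand bilinearly into four terms at once—which is an immaterial distinction.
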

\begin{proof} First note that for any tangle $ A $, 
\begin{itemize}
\item $ A + [0] = [0] + A = A$ ,
\item $ \br{A + [\infty]} = \br{[\infty] + A} = \left(a(A) + xb(A)\right)\br{[\infty]} $.
\end{itemize}
Then we establish \eqref{eq:bracket_sum} by computing the states of $ A $ leaving $ B $ intact, and then those of $ B $:
\begin{align*}
\br{A+B} & = a(A)\br{[0] + B} + b(A)\br{[\infty] + B}\\
& = a(A)\left(a(B)\br{[0]} + b(B)\br{[\infty]}\right) + b(A)\left(a(B) + xb(B)\right)\br{[\infty]}\\
& = a(A)a(B)\br{[0]} + \big(a(A)b(B) + b(A)a(B) + xb(A)b(B)\big)\br{[\infty]}.
\end{align*}
\end{proof}
We now use similar notations to those of Kauffman \cite[p.\ 88]{Kauffman1}: we set $ A^N := \br{N(A)} $, $ A^D := \br{D(A)} $ and $ A^R:=\br{R(A)} $. The following identities hold.

\begin{lemma}
If $ A $ and $ B $ are tangles, then
\begin{align*} 
x(A + B)^D & = A^D B^D;\\
x\left(x^2 - 1\right)(A + B)^N & = \left(A^N B^N + A^D B^D\right)x - \left(A^D B^N + A^N B^D\right);\\
x\left(x^2 - 1\right)(A + B)^R & = A^D B^D x^2 + \left(A^N B^N + A^D B^D\right) x - \left(A^D B^N + A^N B^D + A^D B^D\right).
\end{align*}
\end{lemma}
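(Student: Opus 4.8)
The plan is to push every quantity down to the coordinates $a(\cdot)$ and $b(\cdot)$, where all terms become explicit polynomials in $x$, and then to verify the three identities as polynomial equalities. Writing $a_1 = a(A)$, $b_1 = b(A)$, $a_2 = a(B)$, $b_2 = b(B)$, the closure formulas of the first lemma give $A^N = x^2 a_1 + x b_1$ and $A^D = x a_1 + x^2 b_1$ (and similarly for $B$), while \eqref{eq:bracket_sum} gives $a(A+B) = a_1 a_2$ and $b(A+B) = a_1 b_2 + b_1 a_2 + x b_1 b_2$. Substituting these into the closure formulas renders $(A+B)^N$, $(A+B)^D$, and $(A+B)^R$ as bilinear forms in $(a_1,b_1)$ and $(a_2,b_2)$, that is, as combinations of the four products $a_1 a_2$, $a_1 b_2$, $b_1 a_2$, $b_1 b_2$ with integer-polynomial coefficients. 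The denominator identity is then immediate: expanding gives $x(A+B)^D = x^2 a_1 a_2 + x^3(a_1 b_2 + b_1 a_2) + x^4 b_1 b_2$, and the product $A^D B^D = (x a_1 + x^2 b_1)(x a_2 + x^2 b_2)$ produces exactly the same four coefficients.

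For the numerator identity I would expand the four products $A^N B^N$, $A^D B^D$, $A^D B^N$, $A^N B^D$ as bilinear forms in the same four monomials, form the combination $(A^N B^N + A^D B^D)x - (A^D B^N + A^N B^D)$, and compare it with $x(x^2-1)(A+B)^N$. Both sides carry coefficient $x^5 - x^3$ on $a_1 a_2$ and on $b_1 b_2$, and coefficient $x^4 - x^2$ on each of $a_1 b_2$ and $b_1 a_2$, which settles the claim. The factor $x(x^2-1)$ is forced by the algebra: solving the system $A^N = x^2 a + x b$, $A^D = x a + x^2 b$ for $a$ and $b$ gives $a = (x A^N - A^D)/(x^3 - x)$ and $b = (x A^D - A^N)/(x^3 - x)$, whose common denominator $x^3 - x = x(x^2-1)$ is exactly the multiplier appearing on the left, which is the structural reason a closed form in the closures exists at all.

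Finally, for the $R$-closure identity I would avoid a fresh expansion and simply combine the two identities already proved. Multiplying the denominator identity by $x^2 - 1$ gives $x(x^2-1)(A+B)^D = (x^2-1)A^D B^D$; adding this to the numerator identity and using $(A+B)^R = (A+B)^N + (A+B)^D$ from the remark collects the terms into $A^D B^D x^2 + (A^N B^N + A^D B^D)x - (A^D B^N + A^N B^D + A^D B^D)$, as claimed. The only genuine hazard is the numerator step, where sign slips and off-by-one powers of $x$ are easy to commit while subtracting the four bilinear products; I would guard against this by tabulating, on each side separately, the coefficient of each of $a_1 a_2$, $a_1 b_2$, $b_1 a_2$, $b_1 b_2$ and checking them one at a time.
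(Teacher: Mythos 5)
Your proposal is correct: all the coefficient computations check out (I verified the bilinear expansions for the denominator and numerator identities, and the derivation of the $R$-identity by adding $(x^2-1)$ times the denominator identity to the numerator identity, using $(A+B)^R=(A+B)^N+(A+B)^D$ from the remark). The paper states this lemma without proof, as an immediate consequence of the closure formulas $\br{N(\cdot)}=x^2a+xb$, $\br{D(\cdot)}=xa+x^2b$ and the sum formula \eqref{eq:bracket_sum}, and your reduction to the coordinates $a(\cdot),b(\cdot)$ is exactly that intended argument.
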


Let us now refer to the pair $\begin{pmatrix}
a(A)\\
b(A)  
\end{pmatrix} $,  denoted $ \bp{A}$,  as  \textit{bracket pair} of $ A $ \cite{EliahouFromentin}, and let $ M(A)$ be the matrix defined as $\begin{pmatrix}
a(A) & 0\\
b(A) & a(A) + xb(A)   
\end{pmatrix} $. 

For example $ \bp{[0]} = \begin{pmatrix}
1 \\
0
\end{pmatrix} $,  $ M([0]) = \begin{pmatrix}
1 & 0\\
0 & 1   
\end{pmatrix} $, $ M([\infty]) = \begin{pmatrix}
0 \\1
\end{pmatrix} $ and $ M([\infty]) = \begin{pmatrix}
0 & 0\\
1 & x  
\end{pmatrix} $. 

\begin{lemma}\label{lem:brAn}
With previous notations and usual matrix algebra we have 
\begin{itemize}
\item $ \bp{A} = M(A)\bp{[0]}$;

\item $ \bp{A + B} = M(A)\bra{(B)} = M(A)M(B)\bp{[0]} $.
\end{itemize}
\end{lemma}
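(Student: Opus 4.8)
The plan is to treat both bullets as direct linear-algebraic consequences of the bracket-pair data already computed, since the matrix $M(A)$ is engineered precisely to encode horizontal summation at the level of bracket pairs. No new topological input is needed: everything reduces to reading off coordinates from \eqref{eq:bracket_sum} and performing two matrix multiplications.

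For the first identity I would simply carry out the matrix–vector product. Using $\bp{[0]} = \begin{pmatrix} 1 \\ 0 \end{pmatrix}$ together with the definition of $M(A)$, one obtains
\[
M(A)\bp{[0]} = \begin{pmatrix} a(A) & 0 \\ b(A) & a(A) + xb(A) \end{pmatrix}\begin{pmatrix} 1 \\ 0 \end{pmatrix} = \begin{pmatrix} a(A) \\ b(A) \end{pmatrix} = \bp{A},
\]
which is the claim. This step is immediate and uses nothing beyond the definitions of $\bp{A}$ and $M(A)$.

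For the second identity the key is to extract the coordinates of $\bp{A+B}$ from the horizontal-sum formula \eqref{eq:bracket_sum}, namely $a(A+B) = a(A)a(B)$ and $b(A+B) = a(A)b(B) + b(A)a(B) + xb(A)b(B)$. I would then compute $M(A)\bp{B}$ explicitly,
\[
M(A)\bp{B} = \begin{pmatrix} a(A) & 0 \\ b(A) & a(A)+xb(A) \end{pmatrix}\begin{pmatrix} a(B) \\ b(B) \end{pmatrix} = \begin{pmatrix} a(A)a(B) \\ b(A)a(B) + a(A)b(B) + xb(A)b(B) \end{pmatrix},
\]
and observe that the two entries coincide exactly with $a(A+B)$ and $b(A+B)$, giving $\bp{A+B} = M(A)\bp{B}$. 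Substituting $\bp{B} = M(B)\bp{[0]}$ from the first bullet then yields $\bp{A+B} = M(A)M(B)\bp{[0]}$, closing the chain.

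There is no genuine analytic obstacle here; the whole argument is a verification that the second column of $M(A)$ was chosen correctly. The one point worth a sanity check is that the lower-right entry $a(A)+xb(A)$ of $M(A)$ — rather than merely $a(A)$ — is precisely what produces the $xb(A)b(B)$ term in $b(A+B)$; this is where the coefficient coming from the $\br{A+[\infty]}$ reduction inside the proof of \eqref{eq:bracket_sum} gets absorbed into the matrix formalism. Once that is confirmed, the factorization $M(A)M(B)$ follows, and iterating it immediately gives $\bp{A_n} = M(A)^n\bp{[0]}$, which is presumably the intended application to the closures $N(A_n)$ and $D(A_n)$.
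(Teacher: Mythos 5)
Your proof is correct and is exactly the argument the paper intends: the lemma is stated without proof precisely because it reduces to reading off $a(A+B)$ and $b(A+B)$ from \eqref{eq:bracket_sum} and verifying the matrix--vector products, which is what you do. Your closing remark about the lower-right entry $a(A)+xb(A)$ absorbing the $\br{A+[\infty]}$ reduction, and about iterating to get $\bp{A_n} = \big(M(A)\big)^n\bp{[0]}$, matches how the paper uses the lemma immediately afterwards.
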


We use \Lem{lem:brAn} to show by induction that
\[
\bp{A_n} = \big(M(A)\big)^n\bp{[0]},
\]
where  
\begin{equation*}
\big(M(A)\big)^n = M(A_n) = \begin{pmatrix}
\big(a(A)\big)^n & 0\\
\dfrac{\big(a(A) + xb(A)\big)^n - \big(a(A)\big)^n}{x} & \big(a(A) + xb(A)\big)^n
\end{pmatrix}.
\end{equation*} 

\begin{theorem} 
If $ \br{A_n} = a(A_n)\br{[0]} +  b(A_n)\br{[\infty]}$, then 
\[
a(A_n) = \big(a(A)\big)^n
\]
and 
\[
b(A_n) = \dfrac{\big(a(A) + xb(A)\big)^n - \big(a(A)\big)^n}{x}.
\]
\end{theorem}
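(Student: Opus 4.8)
The plan is to reduce the theorem to the already-established matrix identity and then simply read off its two components. By \Lem{lem:brAn} together with a short induction on $n$ (whose inductive step is $\bp{A_{n+1}} = \bp{A + A_n} = M(A)\bp{A_n}$), we have $\bp{A_n} = \big(M(A)\big)^n\bp{[0]}$. Since $\bp{[0]} = \begin{pmatrix}1\\0\end{pmatrix}$, the entries $a(A_n)$ and $b(A_n)$ are precisely the first column of $\big(M(A)\big)^n$. Thus everything comes down to computing this matrix power in closed form.

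To that end I would prove by induction on $n$ that, writing $\alpha := a(A)$ and $\beta := a(A) + xb(A)$ (so that $b(A) = (\beta-\alpha)/x$ and hence $M(A) = \begin{pmatrix}\alpha & 0\\ (\beta-\alpha)/x & \beta\end{pmatrix}$),
\[
\big(M(A)\big)^n = \begin{pmatrix} \alpha^n & 0\\ \dfrac{\beta^n - \alpha^n}{x} & \beta^n \end{pmatrix}.
\]
The base case $n=1$ is immediate. For the inductive step I would right-multiply the inductive hypothesis by $M(A)$ and evaluate the entries of $\big(M(A)\big)^{n+1} = \big(M(A)\big)^n M(A)$. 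The two diagonal entries give $\alpha^{n+1}$ and $\beta^{n+1}$ at once, and the upper-right entry remains $0$ because $M(A)$ is lower triangular.

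The one step deserving care—and the only place any genuine cancellation occurs—is the lower-left entry, which equals $\frac{\beta^n - \alpha^n}{x}\,\alpha + \beta^n\,\frac{\beta - \alpha}{x}$. I would expand this as $\frac{1}{x}\big(\alpha\beta^n - \alpha^{n+1} + \beta^{n+1} - \alpha\beta^n\big)$ and note that the $\alpha\beta^n$ terms telescope, leaving $\frac{\beta^{n+1} - \alpha^{n+1}}{x}$, which is exactly the claimed form. Multiplying the resulting matrix against $\begin{pmatrix}1\\0\end{pmatrix}$ then delivers $a(A_n) = \alpha^n = \big(a(A)\big)^n$ and $b(A_n) = \frac{\beta^n - \alpha^n}{x} = \frac{\big(a(A)+xb(A)\big)^n - \big(a(A)\big)^n}{x}$, as asserted.

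The main obstacle here is not depth but bookkeeping: one must carry the off-diagonal entry correctly through the induction and confirm that the division by $x$ is legitimate rather than merely formal. This last point is worth recording explicitly: since $\beta - \alpha = xb(A)$ divides $\beta^n - \alpha^n = (\beta-\alpha)(\beta^{n-1} + \cdots + \alpha^{n-1})$, the quotient is $b(A_n) = b(A)\big(\beta^{n-1} + \cdots + \alpha^{n-1}\big)$, a bona fide integer polynomial, so the stated fraction always represents an element of the polynomial ring.
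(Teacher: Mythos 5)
Your proposal is correct and takes essentially the same route as the paper: the paper likewise invokes \Lem{lem:brAn} and induction to establish $\bp{A_n} = \big(M(A)\big)^n\bp{[0]}$ together with the closed form of $\big(M(A)\big)^n$, and then reads the theorem off the first column. Your explicit telescoping computation in the lower-left entry and your observation that $x$ genuinely divides $\big(a(A)+xb(A)\big)^n - \big(a(A)\big)^n$ (so that $b(A_n)$ is a bona fide integer polynomial) simply fill in details the paper leaves implicit.
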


Formulas for the closures are then immediate:
\begin{corollary}
The closures of $ A_n $ verify
\begin{align*} 
\left(A_{n}\right)^D & = x\big(a(A) + xb(A)\big)^{n};\\
\left(A_{n}\right)^N & = \big(a(A) + xb(A)\big)^n + \left(x^2 - 1\right)\big(a(A)\big)^n;\\
\left(A_{n}\right)^R & = (x + 1)\big(a(A) + xb(A)\big)^n + \left(x^2 - 1\right)\big(a(A)\big)^n.
\end{align*}
\end{corollary}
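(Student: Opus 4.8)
The plan is to obtain all three closure polynomials by direct substitution, treating the Theorem's expressions for $a(A_n)$ and $b(A_n)$ as already established and feeding them into the first Lemma's formulas $A^D = xa(A) + x^2b(A)$ and $A^N = x^2a(A) + xb(A)$, read with $A$ replaced by the composite tangle $A_n$. I would begin with the denominator closure, since the division by $x$ hidden in $b(A_n)$ cancels there most transparently. Writing $\alpha := a(A)$ and $\beta := a(A) + xb(A)$ for brevity, the Theorem supplies $a(A_n) = \alpha^n$ and $b(A_n) = (\beta^n - \alpha^n)/x$, so that
\begin{align*}
\left(A_n\right)^D &= x\,a(A_n) + x^2\,b(A_n) = x\alpha^n + x\left(\beta^n - \alpha^n\right) = x\beta^n,
\end{align*}
which is precisely $x\big(a(A) + xb(A)\big)^n$.

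For the numerator closure I would perform the same substitution into $A^N = x^2a(A) + xb(A)$, obtaining
\begin{align*}
\left(A_n\right)^N &= x^2\,a(A_n) + x\,b(A_n) = x^2\alpha^n + \left(\beta^n - \alpha^n\right) = \beta^n + \left(x^2 - 1\right)\alpha^n,
\end{align*}
which matches the claimed $\big(a(A) + xb(A)\big)^n + (x^2 - 1)\big(a(A)\big)^n$. The $R$-closure then demands no fresh computation: the Remark records the identity $A^R = A^N + A^D$ valid for every tangle, so adding the two expressions just derived yields $\left(A_n\right)^R = (1 + x)\beta^n + (x^2 - 1)\alpha^n$, that is $(x + 1)\big(a(A) + xb(A)\big)^n + (x^2 - 1)\big(a(A)\big)^n$, exactly as stated.

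The argument is elementary, so I do not anticipate a genuine obstacle; the sole point meriting a remark is that $b(A_n) = (\beta^n - \alpha^n)/x$ is written as a fraction yet is in truth a polynomial in $x$, because $\beta \equiv \alpha \pmod{x}$ forces $x \mid \beta^n - \alpha^n$. I would flag this so that the cancellations $x^2\,b(A_n) = x(\beta^n - \alpha^n)$ and $x\,b(A_n) = \beta^n - \alpha^n$ are manifestly identities between polynomials, rather than formal manipulations legitimate only away from $x = 0$. With that observation in hand, the three displays above constitute the complete proof.
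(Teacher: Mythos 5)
Your proof is correct and follows exactly the route the paper intends when it calls the corollary ``immediate'': substituting the Theorem's formulas for $a(A_n)$ and $b(A_n)$ into the first Lemma's closure formulas and invoking the Remark's identity $\br{R(A)} = \br{N(A)} + \br{D(A)}$ for the $R$-closure. Your added observation that $x$ divides $\big(a(A)+xb(A)\big)^n - \big(a(A)\big)^n$, so that $b(A_n)$ is genuinely a polynomial, is a careful touch the paper leaves implicit.
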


For example, knowing that $ \br{[1]} = \br{[0]} + \br{[\infty]} $ and $ [n] = [1] + [1] + \cdots + [1] $,  we have 
\begin{align*}
[n]^D & = x(x + 1)^n;\\ 
[n]^N & = (x + 1)^n + x^2 - 1;\\
[n]^R & = (x + 1)^{n + 1} + x^2 - 1.
\end{align*}

\begin{definition}[\cite{KauffmanLambropoulou}]
We define the \textit{polynomial fraction}, $ F(A) $, of the tangle $ A $ as
\begin{equation}
F(A) := \dfrac{b(A)}{a(A)}.
\end{equation}
\end{definition}

With the previous notations, we have
\begin{itemize}
\item $ F(A + B) = F(A) + F(B) + xF(A)F(B) $;
\item $ F\left(\dfrac{1}{A}\right) = \dfrac{1}{F(A)} $;
\item $ F(A_n) = \dfrac{\big(1 + xF(A)\big)^n - 1}{x} $.
\end{itemize}

\begin{remark}\label{Rem:bracketequal1}
Let $A$ and $B$ be two tangles. We have $\br{A} = \br{B}$ 
\begin{enumerate}[label=(\roman*)]
\item if $A$ is planar isotopic to $B$, or\label{it:i}
\item if there exist a tangle $T$ and two knots $K$ and $K'$, with $\br{K} = \br{K'}$, such that $\br{A} = \br{T\#K}
$ and $\br{B} = \br{T\#K'}$.\label{it:ii}
\end{enumerate}
By ``tangle $\#$ knot'' we mean the knot is connected at some segment of the tangle, $\#$ denoting the usual connected sum. Recall that for two knots $ K $ and $ K' $ \cite{Ramaharo}, 
\begin{equation}\label{eq:bracket-sum}
\br{K\#K'} = x^{-1}\br{K}\br{K'}.
\end{equation}
Here, \ref{it:i}  and \ref{it:ii}  suggest that there exists a tangle $A$ for which the identity $\br{A}=\br{T\#K}$ is satisfied only if $A$ is planar isotopic to $T$, and $K$ is the unknot $\bigcirc$. For such case, we say that $A$ is \textit{prime}, and \textit{locally knotted} otherwise. If $ \br{A}=x^{-1}\br{T^*}\br{K} $ and $ T^* $ is prime, then we say that $T^*$ is the \textit{skeleton} of $A$.
\end{remark}

The polynomial fraction allows us to ``extract'' the skeleton of a tangle. Indeed, assume that $ \br{A}=x^{-1}\br{T^*}\br{K} $. By \eqref{eq:bracket-sum} we obtain a common factor which affects both  brackets $ \br{[0]} $ and $ \br{[\infty]} $, i.e.,
\[
\br{A} = \left(x^{-1}\br{K}a(T^*)\right)\br{[0]} + \left(x^{-1}\br{K}b(T^*)\right)\br{[\infty]},
\]
and consequently 
\[
F(A) = F(T^*).
\] 
Last equality  implies that the skeleton $T$ verifies $\gcd(a(T^*), b(T^*)) = 1$. For example, consider tangle $ A $ as in \Figs{subfig:A}.  We have
\[
F(A) = \dfrac{x^2 + 4x + 3}{x^2 + 4x + 3} = 1 = F([1]).
\]

\begin{figure}[!ht]
\centering
\hspace*{\fill}
\subcaptionbox{$ A $\label{subfig:A}}{\includegraphics[width=0.4\linewidth]{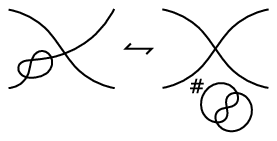}}%
\hspace*{\fill}
\subcaptionbox{$ B $\label{subfig:B}}{\includegraphics[width=0.4\linewidth]{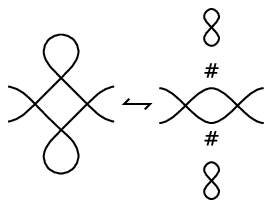}}%
\hspace*{\fill}
\caption{Locally knotted tangles.}
\label{Fig:prime_knotted}
\end{figure}

\begin{remark}\label{Rem:bracketequal2}
Assume that we can disconnect more than one knot in the tangle $A$, i.e.,
\[
A = \left(\cdots\left(\left(T\#K^{(1)}\right)\#K^{(2)}\right)\#\cdots\right)\# K^{(m)},
\]
Then by \eqref{eq:bracket-sum} we have 
\[\br{A} = x^{-m+1}\br{T}\br{K^{(1)}}\cdots\br{K^{(m)}} = x^{-1}\br{T}\br{K}\] 
which is also the bracket polynomial of a certain $  T\#{K} $, where $ K = K^{(1)}\#\cdots\#K^{(m)} $. 
\end{remark}

\begin{example}
Tangle $ B $ in \Figs{subfig:B}  has the same bracket polynomial as the tangle belows
\[
\protect\includegraphics[width=0.2\linewidth,valign=c]{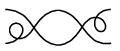},\qquad \protect\includegraphics[width=0.2\linewidth,valign=c]{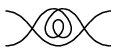},\qquad \protect\includegraphics[width=0.2\linewidth,valign=c]{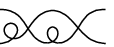},
\]
and verifies
\[
F(B) = \dfrac{x^3 + 4x^2 + 5x + 2}{x^2 + 2x +1} = x + 2 = F([2]).
\]
\end{example}

\section{Results}\label{sec:application}
\subsection{Classifying of knots and tangles}\label{subsec:presentation}
Let $ c(X) $ denote the number of crossings in the diagram $ X $. Our goal is to find all possible values of $ \br{A_n} $ for any tangle  $ A $ with $ c(A)\leq 4 $. Since $ \br{A}$ can always be written as $ x^{-1}\br{T}\br{K} $, with $ T $ prime, it suffices to find all values of $ \br{T} $ and $ \br{K} $ when $c(A)\in\{1,2,3,4\} $ and $ c(A)= c(T)+c(K)  $. As seen in \Rem{Rem:bracketequal1} and \ref{Rem:bracketequal2}, two different diagrams can have the same bracket polynomial. Let us then introduce the following equivalence relation which allows us to to gather together tangles and knots that share the same bracket expression.

\begin{definition}
We say that $ X $ is equivalent to $ X' $  if $ \br{X} = \br{X'} $. We let $ \cla{X} $ denote the equivalence class of $ X $.
\end{definition}

For each numbered equivalence classes listed below, \textbf{Ai} is a representative of a class, \textbf{Bi} is the entry for the bracket of \textbf{Ai}, and entries \textbf{Di}, \textbf{Ni} and \textbf{Ri} are for the brackets of the denominator, numerator  and $ R $ closures of $ \left(\textbf{Ai}\right)_n $, respectively. 

We begin with the classes of prime tangles (cf. \Figs{Fig:prime_tangles}).

\begin{figure}[!ht]
\centering

\subcaptionbox{$ \pi(\textbf{A}\ref{tangle:A1}) $}{\includegraphics[width=0.16\linewidth]{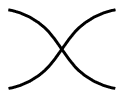}\label{subfig:A1}}%
\subcaptionbox{$ \pi(\textbf{A}\ref{tangle:A2}) $}{\includegraphics[width=0.16\linewidth]{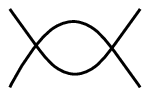}\label{subfig:A2}}%
\subcaptionbox{$ \pi(\textbf{A}\ref{tangle:A3}) $}{\includegraphics[width=0.16\linewidth]{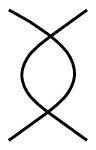}\label{subfig:A3}}%
\subcaptionbox{$ \pi(\textbf{A}\ref{tangle:A4}) $}{\includegraphics[width=0.16\linewidth]{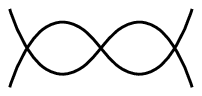}\label{subfig:A4}}%
\subcaptionbox{$ \pi(\textbf{A}\ref{tangle:A5}) $}{\includegraphics[width=0.16\linewidth]{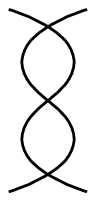}\label{subfig:A5}}%

\subcaptionbox{$ \pi(\textbf{A}\ref{tangle:A6}) $}{\includegraphics[width=0.165\linewidth]{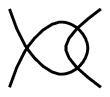}\label{subfig:A6}}
\subcaptionbox{$ \pi(\textbf{A}\ref{tangle:A7}) $}{\includegraphics[width=0.165\linewidth]{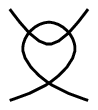}\label{subfig:A7}}%
\subcaptionbox{$ \pi(\textbf{A}\ref{tangle:A8}) $}{\includegraphics[width=0.165\linewidth]{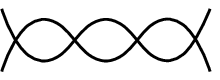}\label{subfig:A8}}%
\subcaptionbox{$ \pi(\textbf{A}\ref{tangle:A9}) $}{\includegraphics[width=0.165\linewidth]{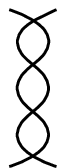}\label{subfig:A9}}%
\subcaptionbox{$ \pi(\textbf{A}\ref{tangle:A10}) $}{\includegraphics[width=0.165\linewidth]{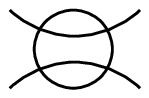}\label{subfig:A10}}%
\subcaptionbox{$ \pi(\textbf{A}\ref{tangle:A11}) $}{\includegraphics[width=0.165\linewidth]{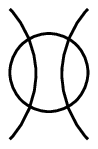}\label{subfig:A11}}%

\hspace*{\fill}
\subcaptionbox{$ \pi(\textbf{A}\ref{tangle:A12}) $}{\includegraphics[width=0.49\linewidth]{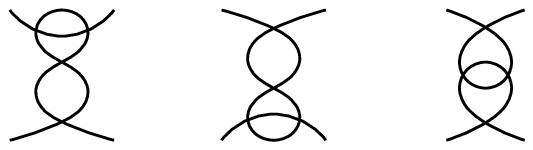}\label{subfig:A12}}%
\subcaptionbox{$ \pi(\textbf{A}\ref{tangle:A13}) $}{\includegraphics[width=0.49\linewidth]{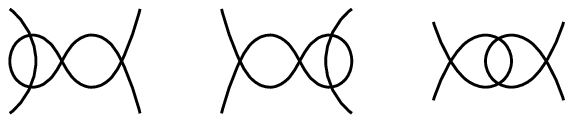}\label{subfig:A13}}%
\hspace*{\fill}\\

\subcaptionbox{$ \pi(\textbf{A}\ref{tangle:A14}) $}{\includegraphics[width=0.8\linewidth]{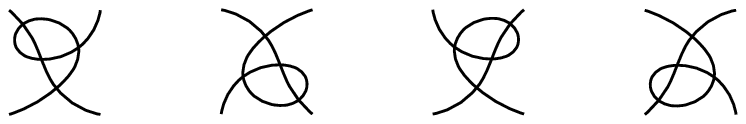}\label{subfig:A14}}%

\subcaptionbox{$ \pi(\textbf{A}\ref{tangle:A15}) $}{\includegraphics[width=0.8\linewidth]{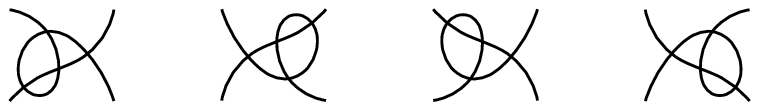}\label{subfig:A15}}%

\hspace*{\fill}
\subcaptionbox{$ \pi(\textbf{A}\ref{tangle:A16}) $}{\includegraphics[width=0.375\linewidth]{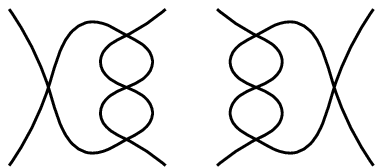}\label{subfig:A16}}%
\hspace*{\fill}
\subcaptionbox{$ \pi(\textbf{A}\ref{tangle:A17}) $}{\includegraphics[width=0.375\linewidth]{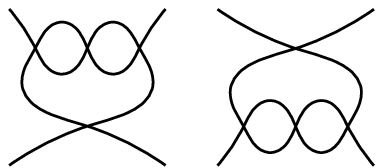}\label{subfig:A17}}%
\hspace*{\fill}
\caption[]{Prime tangles of up to $ 4 $ crossings.}
\label{Fig:prime_tangles}
\end{figure}

\begin{enumerate}

\item $ \clb{[1]} = \big\{[1]\big\} $. \label{tangle:A1}
\begin{description}
\iB{tangle:A1} $\br{[0]}+\br{[\infty]} $.
\iD{tangle:A1} $ x(x+1)^n $, \Tabs{Tab:T1}.
\iN{tangle:A1} $(x+1)^n +x^2 -1$, \Tabs{Tab:T2}.
\iR{tangle:A1} $(x+1)^{n+1} +x^2 -1$, \Tabs{Tab:T3}.
\end{description}

\item $ \clb{[2]} = \big\{[2]\big\} $. \label{tangle:A2}
\begin{description}
\iB{tangle:A2} $\br{[0]}+(x+2)\br{[\infty]} $.
\iD{tangle:A2} $x\left(x+1\right)^{2n} $, \Tabs{Tab:T4} (cf.\ \textbf{D}\ref{tangle:A18}).
\iN{tangle:A2} $\left(x+1\right)^{2n} +x^2-1$, \Tabs{Tab:T5}.
\iR{tangle:A2} $\left(x+1\right)^{2n+1} +x^2-1$, \Tabs{Tab:T6}.
\end{description}

\item $ \cla{\dfrac{1}{[2]}} = \left\{\dfrac{1}{[2]}\right\} $. \label{tangle:A3}
\begin{description}
\iB{tangle:A3} $(x+2)\br{[0]} +\br{[\infty]} $.
\iD{tangle:A3} $x(2x+2)^{n} $, \Tabs{Tab:T7}.
\iN{tangle:A3} $(2x+2)^{n} +\left(x^2-1\right)(x+2)^n$, \Tabs{Tab:T8}.
\iR{tangle:A3} $(x+1)(2x+2)^{n} +\left(x^2-1\right)(x+2)^n$, \Tabs{Tab:T9}.
\end{description}

\item $ \clb{[3]} = \big\{[3]\big\} $. \label{tangle:A4}
\begin{description}
\iB{tangle:A4} $\br{[0]} +\left(x^2+3x+3\right)\br{[\infty]} $.
\iD{tangle:A4} $x\left(x+1\right)^{3n} $, \Tabs{Tab:T10} (cf.\ \textbf{D}\ref{tangle:A25}, \textbf{D}\ref{tangle:A19}).
\iN{tangle:A4} $\left(x+1\right)^{3n} +x^2-1$, \Tabs{Tab:T11}.
\iR{tangle:A1} $\left(x+1\right)^{3n+1} +x^2-1$, \Tabs{Tab:T12}.
\end{description}

\item $ \cla{\dfrac{1}{[3]}} = \left\{\dfrac{1}{[3]}\right\} $. \label{tangle:A5}
\begin{description}
\iB{tangle:A5} $\left(x^3+3x+3\right)\br{[0]} +\br{[\infty]} $.
\iD{tangle:A5} $x\left(x^2+4x+3\right)^{n} $, \Tabs{Tab:T13} (cf.\ \textbf{D}\ref{tangle:A7}).
\iN{tangle:A5} $\left(x^2+4x+3\right)^{n}+\left(x^2-1\right)\left(x^2+3x+3\right)^n$, \Tabs{Tab:T14}.
\iR{tangle:A5} $(x+1)\left(x^2+4x+3\right)^{n}+\left(x^2-1\right)\left(x^2+3x+3\right)^n$, \Tabs{Tab:T15}.
\end{description}

\item $ \cla{[1]+\dfrac{1}{[2]}} = \left\{[1]+\dfrac{1}{[2]},\dfrac{1}{[2]}+[1]\right\} $.  \label{tangle:A6}
\begin{description}
\iB{tangle:A6} $ (x+2)\br{[0]} +(2x+3)\br{[\infty]}  $.
\iD{tangle:A6} $x\left(2x^2+4x+2\right)^{n} $, \Tabs{Tab:T16} (cf.\ \textbf{D}\ref{tangle:A28}, \textbf{D}\ref{tangle:A20}).
\iN{tangle:A6} $\left(2x^2+4x+2\right)^{n}+\left(x^2-1\right)(x+2)^n$, \Tabs{Tab:T17}.
\iR{tangle:A6} $(x+1)\left(2x^2+4x+2\right)^{n}+\left(x^2-1\right)(x+2)^n$, \Tabs{Tab:T18}.
\end{description}

\item $ \clb{[1]*[2]} = \big\{[1]*[2],[2]*[1]\big\} $. \label{tangle:A7}
\begin{description}
\iB{tangle:A7} $ (2x+3)\br{[0]} +(x+2)\br{[\infty]} $.
\iD{tangle:A7} $x\left(x^2+4x+3\right)^{n} $, \Tabs{Tab:T13} (cf.\ \textbf{D}\ref{tangle:A5}).
\iN{tangle:A7} $\left(x^2+4x+3\right)^{n}+\left(x^2-1\right)(2x+3)^n$, \Tabs{Tab:T19}.
\iR{tangle:A7} $(x+1)\left(x^2+4x+3\right)^{n}+\left(x^2-1\right)(2x+3)^n$, \Tabs{Tab:T20}.
\end{description}

\item $ \clb{[4]} = \big\{[4]\big\} $. \label{tangle:A8}
\begin{description}
\iB{tangle:A8} $\br{[0]} +\left(x^3+4x^2+6x+4\right)\br{[\infty]} $.
\iD{tangle:A8} $x\left(x+1\right)^{4n} $, \Tabs{Tab:T21}  (cf.\ \textbf{D}\ref{tangle:A21}, \textbf{D}\ref{tangle:A26}, \textbf{D}\ref{tangle:A31}).
\iN{tangle:A8} $\left(x+1\right)^{4n} +x^2-1$, \Tabs{Tab:T22}.
\iR{tangle:A8} $\left(x+1\right)^{4n+1} +x^2-1$, \Tabs{Tab:T23}.
\end{description}

\item $ \cla{\dfrac{1}{[4]}} = \left\{\dfrac{1}{[4]}\right\} $. \label{tangle:A9}
\begin{description}
\iB{tangle:A9} $\left(x^3+4x^2+6x+4\right)\br{[0]} +\br{[\infty]}$.
\iD{tangle:A9} $x\left(x^3+4x^2+7x+4\right)^{n} $, \Tabs{Tab:T24} (cf.\ \textbf{D}\ref{tangle:A10}, \textbf{D}\ref{tangle:A17}).
\iN{tangle:A9} $\left(x^3+4x^2+7x+4\right)^{n}+\left(x^2-1\right)\left(x^3+4x^2+6x+4\right)^n$, \Tabs{Tab:T25}.
\iR{tangle:A9} $(x+1)\left(x^3+4x^2+7x+4\right)^{n}+\left(x^2-1\right)\left(x^3+4x^2+6x+4\right)^n$, \Tabs{Tab:T26}.
\end{description}

\item $ \clb{[2]*[2]} = \big\{[2]*[2]\big\} $. \label{tangle:A10}
\begin{description}
\iB{tangle:A10} $(3x+4)\br{[0]} +\left(x^2+4x+4\right)\br{[\infty]}$.
\iD{tangle:A10} $x\left(x^3+4x^2+7x+4\right)^{n} $, \Tabs{Tab:T24} (cf.\ \textbf{D}\ref{tangle:A9}, \textbf{D}\ref{tangle:A17}).
\iN{tangle:A10} $\left(x^3+4x^2+7x+4\right)^{n}+\left(x^2-1\right)(3x+4)^n$, \Tabs{Tab:T27}.
\iR{tangle:A10} $(x+1)\left(x^3+4x^2+7x+4\right)^{n}+\left(x^2-1\right)(3x+4)^n$, \Tabs{Tab:T28}.
\end{description}

\item $ \cla{\dfrac{1}{[2]}+\dfrac{1}{[2]}} = \left\{\dfrac{1}{[2]}+\dfrac{1}{[2]}\right\} $. \label{tangle:A11}
\begin{description}
\iB{tangle:A11} $\left(x^2+4x+4\right)\br{[0]} +(3x+4)\br{[\infty]} $.
\iD{tangle:A11} $x\left(2x+2\right)^{2n} $, \Tabs{Tab:T29} (cf.\ \textbf{D}\ref{tangle:A30}).
\iN{tangle:A11} $\left(2x+2\right)^{2n}+\left(x^2-1\right)\left(x+2\right)^{2n}$, \Tabs{Tab:T30}.
\iR{tangle:A11} $(x+1)\left(2x+2\right)^{2n}+\left(x^2-1\right)\left(x+2\right)^{2n}$, \Tabs{Tab:T31}.
\end{description}

\item $ \cla{[2]*\dfrac{1}{[2]}} = \left\{[2]*\dfrac{1}{[2]},\dfrac{1}{[2]}*[2],[1]*[2]*[1]\right\} $. \label{tangle:A12}
\begin{description}
\iB{tangle:A12} $\left(2x^2+6x+5\right)\br{[0]} +(x+2)\br{[\infty]} $.
\iD{tangle:A12} $x\left(3x^2+8x+5\right)^{n} $, \Tabs{Tab:T32} (cf.\ \textbf{D}\ref{tangle:A14}).
\iN{tangle:A12} $\left(3x^2+8x+5\right)^{n}+\left(x^2-1\right)\left(2x^2+6x+5\right)^n$, \Tabs{Tab:T33}.
\iR{tangle:A12} $(x+1)\left(3x^2+8x+5\right)^{n}+\left(x^2-1\right)\left(2x^2+6x+5\right)^n$, \Tabs{Tab:T34}.
\end{description}

\item $ \cla{[2]+\dfrac{1}{[2]}} = \left\{[2]+\dfrac{1}{[2]},\dfrac{1}{[2]}+[2], [1]+\dfrac{1}{[2]}+[1]\right\} $. \label{tangle:A13}
\begin{description}
\iB{tangle:A13} $ (x+2)\br{[0]} +\left(2x^2+6x+5\right)\br{[\infty]} $.
\iD{tangle:A13} $x\left(2x^3+6x^2+6x+2\right)^{n} $, \Tabs{Tab:T35} (cf.\  \textbf{D}\ref{tangle:A23}, \textbf{D}\ref{tangle:A27}, \textbf{D}\ref{tangle:A29}, \textbf{D}\ref{tangle:A33}).
\iN{tangle:A13} $\left(2x^3+6x^2+6x+2\right)^{n}+\left(x^2-1\right)(x+2)^n$, \Tabs{Tab:T36}.
\iR{tangle:A13} $(x+1)\left(2x^3+6x^2+6x+2\right)^{n}+\left(x^2-1\right)(x+2)^n$, \Tabs{Tab:T37}.
\end{description}

\item $ \begin{aligned}[t]
\cla{[1]*\left([1]+\dfrac{1}{[2]}\right)}  = & \left\{[1]*\left([1]+\dfrac{1}{[2]}\right),\left([1]+\dfrac{1}{[2]}\right)*[1],[1]*\left(\dfrac{1}{[2]}+[1]\right),\right.\\
& \left.\left(\dfrac{1}{[2]}+[1]\right)*[1]\right\}.		
\end{aligned} $\label{tangle:A14}
\begin{description}
\iB{tangle:A14} $\left(x^2+5x+5\right)\br{[0]} +(2x+3)\br{[\infty]} $.
\iD{tangle:A14} $x\left(3x^2+8x+5\right)^{n} $, \Tabs{Tab:T32}  (cf.\ \textbf{D}\ref{tangle:A12}).
\iN{tangle:A14} $\left(3x^2+8x+5\right)^{n}+\left(x^2-1\right)\left(x^2+5x+5\right)^n$, \Tabs{Tab:T38}.
\iR{tangle:A14} $(x+1)\left(3x^2+8x+5\right)^{n}+\left(x^2-1\right)\left(x^2+5x+5\right)^n$, \Tabs{Tab:T39}.
\end{description}

\item $ \clb{[1]+\left([2]*[1]\right)} = \big\{[1]+\left([2]*[1]\right),\left([2]*[1]\right)+[1],[1]+\left([1]*[2]\right),\left([1]*[2]\right)+[1]\big\} $. \label{tangle:A15}
\begin{description}
\iB{tangle:A15} $ (2x+3)\br{[0]} +\left(x^2+5x+5\right)\br{[\infty]} $.
\iD{tangle:A15} $x\left(x^3+5x^2+7x+3\right)^{n} $, \Tabs{Tab:T40} (cf.\ \textbf{D}\ref{tangle:A16}, \textbf{D}\ref{tangle:A22}, \textbf{D}\ref{tangle:A24}, \textbf{D}\ref{tangle:A32}).
\iN{tangle:A15} $\left(x^3+5x^2+7x+3\right)^{n}+\left(x^2-1\right)\left(2x+3\right)^n$, \Tabs{Tab:T41}.
\iR{tangle:A15} $(x+1)\left(x^3+5x^2+7x+3\right)^{n}+\left(x^2-1\right)\left(2x+3\right)^n$, \Tabs{Tab:T42}.
\end{description}

\item $ \cla{[1]+\dfrac{1}{[3]}} = \left\{[1]+\dfrac{1}{[3]}, \dfrac{1}{[3]}+[1]\right\} $. \label{tangle:A16}
\begin{description}
\iB{tangle:A16} $ \left(x^2+3x+3\right)\br{[0]} +\left(x^2+4x+4\right)\br{[\infty]} $.
\iD{tangle:A16} $x\left(x^3+5x^2+7x+3\right)^{n} $, \Tabs{Tab:T40} (cf.\ \textbf{D}\ref{tangle:A15}, \textbf{D}\ref{tangle:A22}, \textbf{D}\ref{tangle:A24}, \textbf{D}\ref{tangle:A32}).
\iN{tangle:A16} $\left(x^3+5x^2+7x+3\right)^{n}+\left(x^2-1\right)\left(x^2+3x+3\right)^n$, \Tabs{Tab:T43}.
\iR{tangle:A16} $(x+1)\left(x^3+5x^2+7x+3\right)^{n}+\left(x^2-1\right)\left(x^2+3x+3\right)^n$, \Tabs{Tab:T44}.
\end{description}

\item $ \clb{[1]*[3]} = \big\{[1]*[3],[3]*[1]\big\} $. \label{tangle:A17}
\begin{description}
\iB{tangle:A17} $ \left(x^2+4x+4\right)\br{[0]} +\left(x^2+3x+3\right)\br{[\infty]}  $.
\iD{tangle:A17} $x\left(x^3+4x^2+7x+4\right)^{n} $, \Tabs{Tab:T24} (cf.\ \textbf{D}\ref{tangle:A9}, \textbf{D}\ref{tangle:A10}).
\iN{tangle:A17} $\left(x^3+4x^2+7x+4\right)^{n}+\left(x^2-1\right)\left(x^2+4x+4\right)^n$, \Tabs{Tab:T45}.
\iR{tangle:A17} $(x+1)\left(x^3+4x^2+7x+4\right)^{n}+\left(x^2-1\right)\left(x^2+4x+4\right)^n$, \Tabs{Tab:T46}.
\end{description}
\end{enumerate}

\begin{notation}
As previously raised in \Rem{Rem:bracketequal2}, the class $ \clb{T\#K} $ is also equal to the set $ \clb{T}\#\clb{K} :=\left\{T\#K:T\in\clb{T}, K\in\clb{K}\right\} $. We identify the following classes for $ \clb{K} $, where $ c(K)\in\{1,2,3,4\} $ \cite[p.\ 14]{Arnold}, \cite{Demaine}.
\begin{itemize}
\item[\textbf{K1}:] $ \cla{\protect\includegraphics[width=.055\linewidth,valign=c]{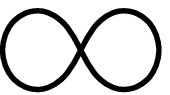}} =\left\{K:\br{K}=x^2 + x\right\}=\left\{\protect\includegraphics[width=.055\linewidth,valign=c]{K1},\protect\includegraphics[width=.06\linewidth,valign=c]{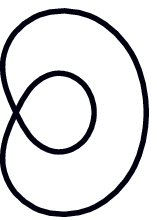}\right\} $;

\item[\textbf{K2}:]  $ \cla{\protect\includegraphics[width=.08\linewidth,valign=c]{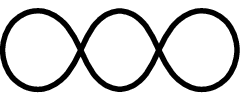}} = \left\{K:\br{K}=x^3 + 2x^2 + x\right\} = \left\{\protect\includegraphics[width=.077\linewidth,valign=c]{K2},\protect\includegraphics[width=.077\linewidth,valign=c]{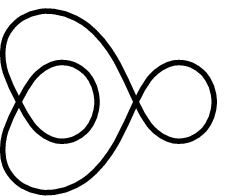},\protect\includegraphics[width=.077\linewidth,valign=c]{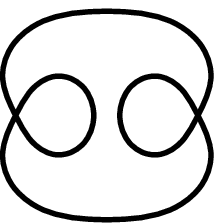},\protect\includegraphics[width=.077\linewidth,valign=c]{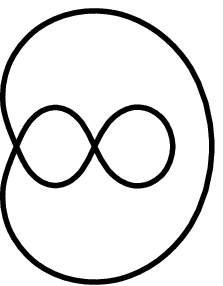},\protect\includegraphics[width=.077\linewidth,valign=c]{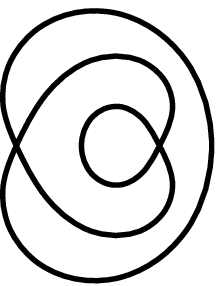}\right\}$;

\item[\textbf{K3}:]  $ \cla{\protect\includegraphics[width=.06\linewidth,valign=c]{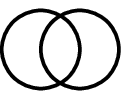}} = \left\{K:\br{K}=2x^2 + 2x\right\} = \left\{\protect\includegraphics[width=.07\linewidth,valign=c]{K3}\right\};$

\item[\textbf{K4}:]  $ \begin{aligned}[t]
\cla{\protect\includegraphics[width=.1\linewidth,valign=c]{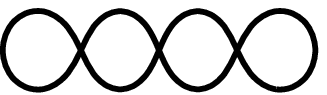}}  = &\left\{K:\br{K}=x^4 + 3x^3 + 3x^2 + x\right\}  = \Biggl\{\protect\includegraphics[width=.1\linewidth,valign=c]{K4},\protect\includegraphics[width=.1\linewidth,valign=c]{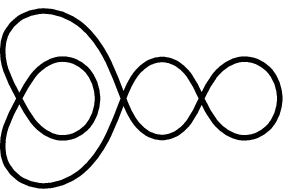},\protect\includegraphics[width=.1\linewidth,valign=c]{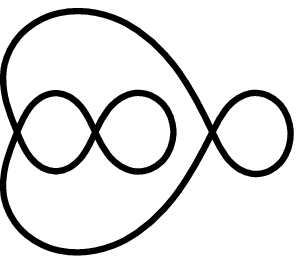},\\
& \protect\includegraphics[width=.085\linewidth,valign=c]{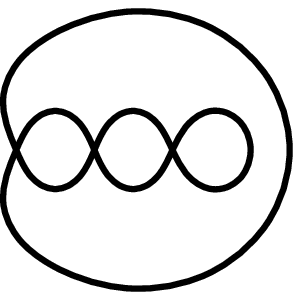},\protect\includegraphics[width=.085\linewidth,valign=c]{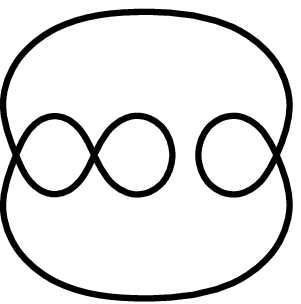},\protect\includegraphics[width=.085\linewidth,valign=c]{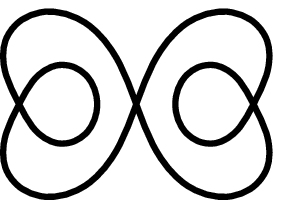},\protect\includegraphics[width=.085\linewidth,valign=c]{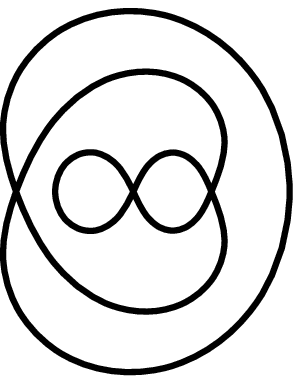},\protect\includegraphics[width=.085\linewidth,valign=c]{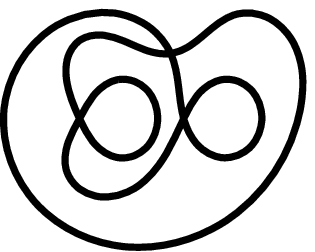},\protect\includegraphics[width=.085\linewidth,valign=c]{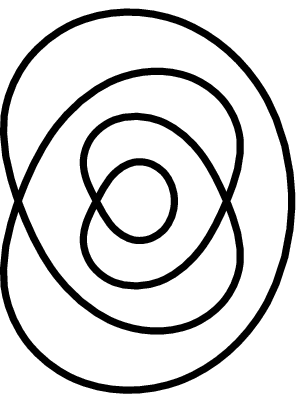},\protect\includegraphics[width=.11\linewidth,valign=c]{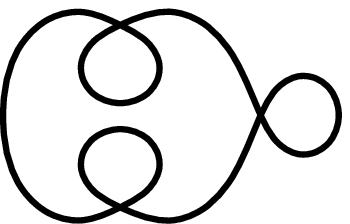},\protect\includegraphics[width=.11\linewidth,valign=c]{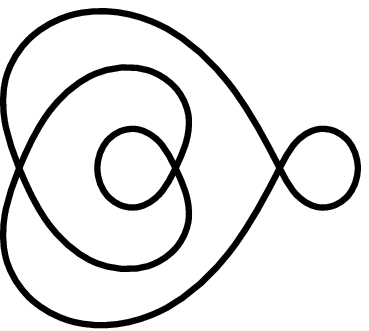},\\
& \protect\includegraphics[width=.09\linewidth,valign=c]{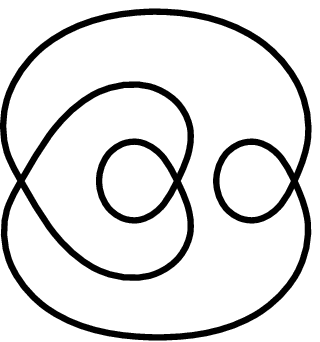},\protect\includegraphics[width=.09\linewidth,valign=c]{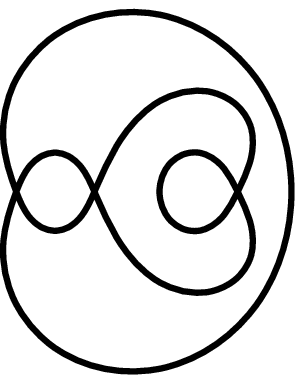}, \protect\includegraphics[width=.08\linewidth,valign=c]{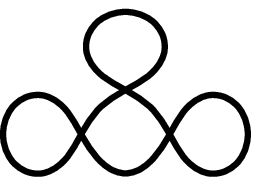}, \protect\includegraphics[width=.125\linewidth,valign=c]{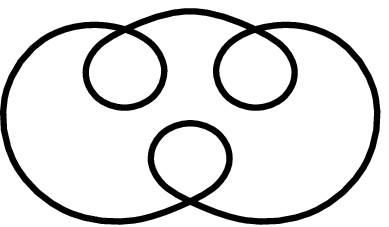},\protect\includegraphics[width=.11\linewidth,valign=c]{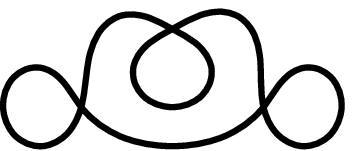},\protect\includegraphics[width=.09\linewidth,valign=c]{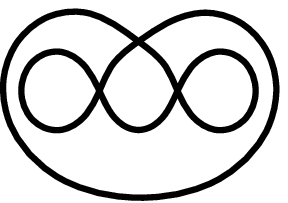},\protect\includegraphics[width=.1\linewidth,valign=c]{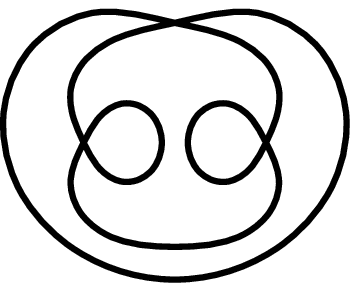}\Biggl\};
\end{aligned} $

\item[\textbf{K5}:]  $ \cla{\protect\includegraphics[width=.05\linewidth,valign=c]{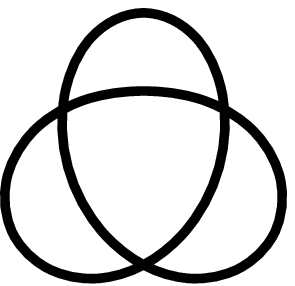}} = \left\{K:\br{K}=x^3 + 4x^2 + 3x\right\}  = \left\{\protect\includegraphics[width=0.07\linewidth,valign=c]{K5},\protect\includegraphics[width=.055\linewidth,valign=c]{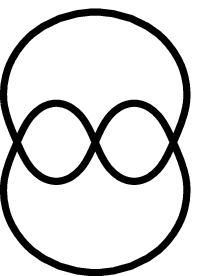}\right\}$;

\item[\textbf{K6}:]  $ \cla{\protect\includegraphics[width=.07\linewidth,valign=c]{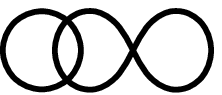}} = \left\{K:\br{K}=2x^3 + 4x^2 + 2x\right\}=\left\{\protect\includegraphics[width=.07\linewidth,valign=c]{K6},\protect\includegraphics[width=.07\linewidth,valign=c]{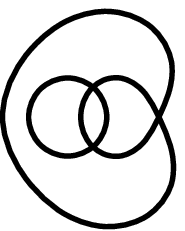},\protect\includegraphics[width=.07\linewidth,valign=c]{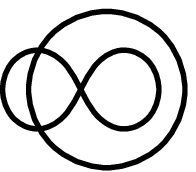},\protect\includegraphics[width=.09\linewidth,valign=c]{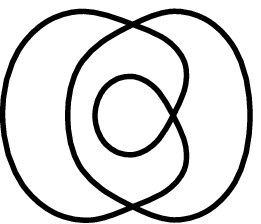},\protect\includegraphics[width=.07\linewidth,valign=c]{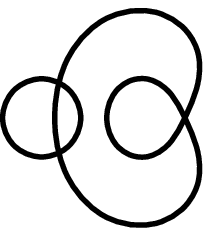}\right\}$.
\end{itemize}
Note also that 
\begin{itemize}
\item  $ \cla{\protect\includegraphics[width=.08\linewidth,valign=c]{K2}} =   \cla{\protect\includegraphics[width=.055\linewidth,valign=c]{K1}}\#  \cla{\protect\includegraphics[width=.055\linewidth,valign=c]{K1}}$;
\item  $ \cla{\protect\includegraphics[width=.1\linewidth,valign=c]{K4}} =   \cla{\protect\includegraphics[width=.08\linewidth,valign=c]{K2}}\#  \cla{\protect\includegraphics[width=.055\linewidth,valign=c]{K1}}$;
\item  $ \cla{\protect\includegraphics[width=.07\linewidth,valign=c]{K6}} =   \cla{\protect\includegraphics[width=.06\linewidth,valign=c]{K3}}\#  \cla{\protect\includegraphics[width=.055\linewidth,valign=c]{K1}}$.
\end{itemize}
\end{notation}

Now we have the following list for locally knotted tangles.	
\begin{enumerate}[resume]
\item $ \clb{[1]}\#\cla{\protect\includegraphics[width=.055\linewidth,valign=c]{K1}} $. \label{tangle:A18}
\begin{description}
\iB{tangle:A18} $ (x+1)\br{[0]} +(x+1)\br{[\infty]}$.
\iD{tangle:A18} $x\left(x+1\right)^{2n} $, \Tabs{Tab:T4} (cf.\ \textbf{D}\ref{tangle:A2}).
\iN{tangle:A18} $\left(x+1\right)^{2n} +\left(x^2-1\right)(x+1)^n$, \Tabs{Tab:T47}.
\iR{tangle:A18} $\left(x+1\right)^{2n+1} +\left(x^2-1\right)(x+1)^n$, \Tabs{Tab:T48}.
\end{description}

\item $ \clb{[2]}\#\cla{\protect\includegraphics[width=.055\linewidth,valign=c]{K1}} $. \label{tangle:A19}  
\begin{description}
\iB{tangle:A19} $ (x+1)\br{[0]} +(x+3x+2)\br{[\infty]}$.
\iD{tangle:A19} $x\left(x+1\right)^{3n} $, \Tabs{Tab:T10} (cf.\ \textbf{D}\ref{tangle:A4}, \textbf{D}\ref{tangle:A25}).
\iN{tangle:A19} $\left(x+1\right)^{3n} +\left(x^2-1\right)(x+1)^n$, \Tabs{Tab:T49}.
\iR{tangle:A19} $\left(x+1\right)^{3n+1} +\left(x^2-1\right)(x+1)^n$, \Tabs{Tab:T50}.
\end{description}

\item $ \cla{\dfrac{1}{[2]}}\#\cla{\protect\includegraphics[width=.055\linewidth,valign=c]{K1}} $. \label{tangle:A20}  
\begin{description}
\iB{tangle:A20} $ (x+3x+2)\br{[0]} +(x+1)\br{[\infty]} $.
\iD{tangle:A20} $ x\left(2x^2+4x+2\right)^{n} $, \Tabs{Tab:T16} (cf.\ \textbf{D}\ref{tangle:A6}, \textbf{D}\ref{tangle:A28}).
\iN{tangle:A20} $ \left(2x^2+4x+2\right)^{n} +\left(x^2-1\right)\left(x^2+3x+2\right)^n$, \Tabs{Tab:T51}.
\iR{tangle:A20} $ (x+1)\left(2x^2+4x+2\right)^{n} +\left(x^2-1\right)\left(x^2+3x+2\right)^n$, \Tabs{Tab:T52}.
\end{description}

\item $ \clb{[3]}\#\cla{\protect\includegraphics[width=.055\linewidth,valign=c]{K1}} $.  \label{tangle:A21} 
\begin{description}
\iB{tangle:A21} $ (x+1)\br{[0]} +\left(x^3+4x^2+6x+3\right)\br{[\infty]}$ 
\iD{tangle:A21} $x\left(x+1\right)^{4n} $, \Tabs{Tab:T21}  (cf.\ \textbf{D}\ref{tangle:A8}, \textbf{D}\ref{tangle:A26}, \textbf{D}\ref{tangle:A31}).
\iN{tangle:A21} $\left(x+1\right)^{4n} +\left(x^2-1\right)(x+1)^n$, \Tabs{Tab:T53}.
\iR{tangle:A21} $\left(x+1\right)^{4n+1} +\left(x^2-1\right)(x+1)^n$, \Tabs{Tab:T54}.
\end{description}

\item $ \cla{\dfrac{1}{[3]}}\#\cla{\protect\includegraphics[width=.055\linewidth,valign=c]{K1}} $.  \label{tangle:A22}
\begin{description}
\iB{tangle:A22} $ \left(x^3+4x^2+6x+3\right)\br{[0]} +(x+1)\br{[\infty]} $.
\iD{tangle:A22} $ x\left(x^3+5x^2+7x+3\right)^{n} $, \Tabs{Tab:T40}   (cf.\ \textbf{D}\ref{tangle:A15}, \textbf{D}\ref{tangle:A16}, \textbf{D}\ref{tangle:A24}, \textbf{D}\ref{tangle:A32}).
\iN{tangle:A22} $\left(x^3+5x^2+7x+3\right)^{n}+\left(x^2-1\right)\left(x^3+4x^2+6x+3\right)^n$, \Tabs{Tab:T55}.
\iR{tangle:A22} $(x+1)\left(x^3+5x^2+7x+3\right)^{n}+\left(x^2-1\right)\left(x^3+4x^2+6x+3\right)^n$, \Tabs{Tab:T56}.
\end{description}

\item $ \cla{[1]+\dfrac{1}{[2]}}\#\cla{\protect\includegraphics[width=.055\linewidth,valign=c]{K1}} $.  \label{tangle:A23}
\begin{description}
\iB{tangle:A23} $ \left(x^2+3x+2\right)\br{[0]} +\left(2x^2+5x+3\right)\br{[\infty]}$.
\iD{tangle:A23} $ x\left(2x^3+6x^2+6x+2\right)^{n} $, \Tabs{Tab:T35} (cf.\ \textbf{D}\ref{tangle:A13}, \textbf{D}\ref{tangle:A27}, \textbf{D}\ref{tangle:A29}, \textbf{D}\ref{tangle:A33}).
\iN{tangle:A23} $ \left(2x^3+6x^2+6x+2\right)^{n}+\left(x^2-1\right)\left(x^2+3x+2\right)^n$, \Tabs{Tab:T57}.
\iR{tangle:A23} $ (x+1)\left(2x^3+6x^2+6x+2\right)^{n}+\left(x^2-1\right)\left(x^2+3x+2\right)^n$, \Tabs{Tab:T58}.
\end{description}

\item $ \pi\big([1]*[2]\big)\#\cla{\protect\includegraphics[width=.055\linewidth,valign=c]{K1}} $.  \label{tangle:A24}
\begin{description}
\iB{tangle:A24} $ \left(2x^2+5x+3\right)\br{[0]} +\left(x^2+3x+2\right)\br{[\infty]}$.
\iD{tangle:A24} $ x\left(x^3+5x^2+7x+3\right)^{n} $, \Tabs{Tab:T40} (cf.\ \textbf{D}\ref{tangle:A15}, \textbf{D}\ref{tangle:A16}, \textbf{D}\ref{tangle:A22}, \textbf{D}\ref{tangle:A32}).
\iN{tangle:A24} $ \left(x^3+5x^2+7x+3\right)^{n}+\left(x^2-1\right)\left(2x^2+5x+3\right)^n$, \Tabs{Tab:T59}.
\iR{tangle:A24} $ (x+1)\left(x^3+5x^2+7x+3\right)^{n}+\left(x^2-1\right)\left(2x^2+5x+3\right)^n$, \Tabs{Tab:T60}.
\end{description}

\item $\pi\big([1]\big)\#\cla{\protect\includegraphics[width=.08\linewidth,valign=c]{K2}} $.  \label{tangle:A25} 
\begin{description}
\iB{tangle:A25} $ \left(x^2+2x+1\right)\br{[0]} +\left(x^2+2x+1\right)\br{[\infty]}$.
\iD{tangle:A25} $ x\left(x+1\right)^{3n} $, \Tabs{Tab:T10} (cf.\ \textbf{D}\ref{tangle:A4}, \textbf{D}\ref{tangle:A19}).
\iN{tangle:A25} $ \left(x+1\right)^{3n} +\left(x^2-1\right)\left(x+1\right)^{2n}$, \Tabs{Tab:T61}.
\iR{tangle:A25} $ \left(x+1\right)^{3n+1} +\left(x^2-1\right)\left(x+1\right)^{2n}$, \Tabs{Tab:T62}.
\end{description}

\item $\pi\big([2]\big)\#\cla{\protect\includegraphics[width=.08\linewidth,valign=c]{K2}} $. \label{tangle:A26} 
\begin{description}
\iB{tangle:A26} $ \left(x^2+2x+1\right)\br{[0]} +\left(x^3+4x^2+5x+2\right)\br{[\infty]}$.
\iD{tangle:A26} $ x\left(x+1\right)^{4n} $, \Tabs{Tab:T21}   (cf.\ \textbf{D}\ref{tangle:A8}, \textbf{D}\ref{tangle:A21}, \textbf{D}\ref{tangle:A31}).
\iN{tangle:A26} $ \left(x+1\right)^{4n} +\left(x^2-1\right)\left(x+1\right)^{2n}$, \Tabs{Tab:T63}.
\iR{tangle:A26} $ \left(x+1\right)^{4n+1} +\left(x^2-1\right)\left(x+1\right)^{2n}$, \Tabs{Tab:T64}.
\end{description}

\item $\cla{\dfrac{1}{[2]}}\#\cla{\protect\includegraphics[width=.08\linewidth,valign=c]{K2}} $.  \label{tangle:A27} 
\begin{description}
\iB{tangle:A27} $ \left(x^3+4x^2+5x+2\right)\br{[0]} +\left(x^2+2x+1\right)\br{[\infty]} $.
\iD{tangle:A27} $ x\left(2x^3+6x^2+6x+2\right)^{n} $, \Tabs{Tab:T35} (cf.\ \textbf{D}\ref{tangle:A13}, \textbf{D}\ref{tangle:A23}, \textbf{D}\ref{tangle:A29}, \textbf{D}\ref{tangle:A33}).
\iN{tangle:A27} $ \left(2x^3+6x^2+6x+2\right)^{n} +\left(x^2-1\right)\left(x^3+4x^2+5x+2\right)^n$, \Tabs{Tab:T65}.
\iR{tangle:A27} $ (x+1)\left(2x^3+6x^2+6x+2\right)^{n} +\left(x^2-1\right)\left(x^3+4x^2+5x+2\right)^n$, \Tabs{Tab:T66}.
\end{description}

\item $\pi\big([1]\big)\#\cla{\protect\includegraphics[width=.06\linewidth,valign=c]{K3}} $. \label{tangle:A28}
\begin{description}
\iB{tangle:A28} $ (2x+2)\br{[0]} +(2x+2)\br{[\infty]}$.
\iD{tangle:A28} $x\left(2x^2+4x+2\right)^n $, \Tabs{Tab:T16} (cf.\ \textbf{D}\ref{tangle:A6}, \textbf{D}\ref{tangle:A20}).
\iN{tangle:A28} $\left(2x^2+4x+2\right)^n +\left(x^2-1\right)(2x+2)^n$, \Tabs{Tab:T67}.
\iR{tangle:A28} $(x+1)\left(2x^2+4x+2\right)^n +\left(x^2-1\right)(2x+2)^n$,	 \Tabs{Tab:T68}.
\end{description}

\item $\pi\big([2]\big)\#\cla{\protect\includegraphics[width=.06\linewidth,valign=c]{K3}} $. \label{tangle:A29} 
\begin{description}
\iB{tangle:A29} $ (2x+2)\br{[0]} +\left(2x^2+6x+4\right)\br{[\infty]}$.
\iD{tangle:A29} $ x\left(2x^3+6x^2+6x+2\right)^{n} $, \Tabs{Tab:T35} (cf.\ \textbf{D}\ref{tangle:A13}, \textbf{D}\ref{tangle:A23}, \textbf{D}\ref{tangle:A27}, \textbf{D}\ref{tangle:A33}).
\iN{tangle:A29} $ \left(2x^3+6x^2+6x+2\right)^{n} +\left(x^2-1\right)\left(2x+2\right)^n$, \Tabs{Tab:T69}.
\iR{tangle:A29} $ (x+1)\left(2x^3+6x^2+6x+2\right)^{n} +\left(x^2-1\right)\left(2x+2\right)^n$, \Tabs{Tab:T70}.
\end{description}

\item  $ \cla{\dfrac{1}{[2]}}\#\cla{\protect\includegraphics[width=.06\linewidth,valign=c]{K3}} $. \label{tangle:A30} 
\begin{description}
\iB{tangle:A30} $ \left(2x^2+6x+4\right)\br{[0]} +(2x+2)\br{[\infty]}$.
\iD{tangle:A30} $ x\left(2x+2\right)^{2n} $, \Tabs{Tab:T29} (cf.\ \textbf{D}\ref{tangle:A11}).
\iN{tangle:A30} $ \left(2x+2\right)^{2n} +\left(x^2-1\right)\left(2x^2+6x+4\right)^n$, \Tabs{Tab:T71}.
\iR{tangle:A30} $ (x+1)\left(2x+2\right)^{2n} +\left(x^2-1\right)\left(2x^2+6x+4\right)^n$, \Tabs{Tab:T72}.
\end{description}

\item $\pi\big([1]\big)\#\cla{\protect\includegraphics[width=.1\linewidth,valign=c]{K4}} $. \label{tangle:A31} 
\begin{description}
\iB{tangle:A31} $ \left(x^3+3x^2+3x+1\right)\br{[0]} +\left(x^3+3x^2+3x+1\right)\br{[\infty]}$.
\iD{tangle:A31} $ x\left(x+1\right)^{4n} $,  \Tabs{Tab:T21}  (cf.\ \textbf{D}\ref{tangle:A8}, \textbf{D}\ref{tangle:A21}, \textbf{D}\ref{tangle:A26}).
\iN{tangle:A31} $ \left(x+1\right)^{4n} +\left(x^2-1\right)\left(x+1\right)^{3n}$, \Tabs{Tab:T73}.
\iR{tangle:A31} $ \left(x+1\right)^{4n+1} +\left(x^2-1\right)\left(x+1\right)^{3n}$, \Tabs{Tab:T74}.
\end{description}

\item  $\pi\big([1]\big)\#\cla{\protect\includegraphics[width=.05\linewidth,valign=c]{K5}} $. \label{tangle:A32} 
\begin{description}
\iB{tangle:A32} $ \left(x^2+4x+3\right)\br{[0]} +\left(x^2+4x+3\right)\br{[\infty]}$.
\iD{tangle:A32} $ x\left(x^3+5x^2+7x+3\right)^{n} $, \Tabs{Tab:T40} (cf.\ \textbf{D}\ref{tangle:A15}, \textbf{D}\ref{tangle:A16}, \textbf{D}\ref{tangle:A22}, \textbf{D}\ref{tangle:A24}).
\iN{tangle:A32} $ \left(x^3+5x^2+7x+3\right)^{n}+\left(x^2-1\right)\left(x^2+4x+3\right)^n$, \Tabs{Tab:T75}.
\iR{tangle:A32} $ (x+1)\left(x^3+5x^2+7x+3\right)^{n}+\left(x^2-1\right)\left(x^2+4x+3\right)^n$, \Tabs{Tab:T76}.
\end{description}

\item $\pi\big([1]\big)\#\cla{\protect\includegraphics[width=.07\linewidth,valign=c]{K6}} $. \label{tangle:A33} 
\begin{description}
\iB{tangle:A33} $ \left(2x^2+4x+2\right)\br{[0]} +\left(2x^2+4x+2\right)\br{[\infty]}$.
\iD{tangle:A33} $ x\left(2x^3+6x^2+6x+2\right)^{n} $, \Tabs{Tab:T35} (cf.\ \textbf{D}\ref{tangle:A13}, \textbf{D}\ref{tangle:A23}, \textbf{D}\ref{tangle:A27}, \textbf{D}\ref{tangle:A29}).
\iN{tangle:A33} $ \left(2x^3+6x^2+6x+2\right)^{n} +\left(x^2-1\right)\left(2x^2+4x+2\right)^n$, \Tabs{Tab:T77}.
\iR{tangle:A33} $ (x+2)\left(2x^3+6x^2+6x+2\right)^{n} +\left(x^2-1\right)\left(2x^2+4x+2\right)^n$, \Tabs{Tab:T78}.
\end{description}
\end{enumerate}

\begin{remark}
Except for the classes of $ [0] $ and $ [\infty] $ below, we do not take into consideration other tangles verifying $ a(A).b(A)=0 $.
\begin{enumerate}[resume]
\item $ \clb{[0]}=\big\{[0]\big\} $.\label{tangle:A34}
\begin{description}
\iB{tangle:A34} $\br{[0]} $.
\iD{tangle:A34} $ x $.
\iN{tangle:A34} $ x^2$.
\iR{tangle:A34} $ x^2+x$.
\end{description}

\item $ \clb{[0]}=\big\{[\infty]\big\} $.\label{tangle:A35}
\begin{description}
\iB{tangle:A35} $\br{[\infty]} $.
\iD{tangle:A35} $ x^{n+1} $, \Tabs{Tab:T79}.
\iN{tangle:A35} $ x^2 $ if $ n=0 $, $ x^n$ otherwise, \Tabs{Tab:T80}.
\iR{tangle:A35} $ x^2 + x $ if $ n=0 $, $ x^{n+1}+x^n$ otherwise, \Tabs{Tab:T81}.
\end{description}
\end{enumerate}
\end{remark}
\subsection{Tables of coefficients}\label{subsec:coefficients}
Tables that are listed here consist of the coefficients in the expansion of the bracket polynomials seen in \Subsec{subsec:presentation} for small $ n $.

\begin{table}[H]
\centering
$% [inline block 0: 81 envs, 53667 chars -> data_tex | \begin{array}{@{}c|lrrrrrrr@{}}% n \backslash k & 0 & 1 & 2 & 3 & 4 & 5 & 6\\...]
$
\caption{$ \cof{x^k}p_n(x) $, where $ p_0(x) = x^2+x $ and $ p_n(x)= x^{n+1}+x^n $ for $ n\geq 1 $ (cf.\ \textbf{R}\ref{tangle:A35}).}
\label{Tab:T81}
\end{table}

\bigskip
\hrule
\bigskip

\noindent 2010 {\it Mathematics Subject Classification}: 57M25; 05A10.

\bigskip
\hrule
\bigskip

%http://oeis.org/A300453http://oeis.org/A034870http://oeis.org/A038208

\noindent (Concerned with sequences
\seqnum{A007318}, \seqnum{A034870}, \seqnum{A038208}, \seqnum{A129185}, \seqnum{A139548}, \seqnum{A299989},\\ \seqnum{A300184}, \seqnum{A300192} and \seqnum{A300453}.)
\bigskip
\hrule
\bigskip
\end{document}